\newcommand{\ncmd}{\newcommand}
\ncmd{\dsum}{\displaystyle\sum}
\ncmd{\dprod}{\displaystyle\prod}
\ncmd{\ovl}{\overline}
\ncmd{\ptl}{\partial}
\ncmd{\rar}{\rightarrow}
\ncmd{\mA}{\script{A}}
\ncmd{\mC}{\mathbb{C}}
\ncmd{\mg}{\mathfrak{g}}
\ncmd{\mh}{\mathfrak{h}}
\ncmd{\mo}{\mathbbm{1}}
\newtheorem{Theorem}{Theorem}
\newtheorem{Lemma}{Lemma}
\begin{document}
\title{The Classical Family Algebra of the adjoint representation of $sl(n)$}
\author{Matthew Tai}
\maketitle
\section*{Table of Contents}
\begin{center}
Abstract\\
Notation\\
1: The Problem\\
2: Generators and Relations\\
3: Sufficiency of the Generators\\
4: Proof of the Relations\\
5: Natural Generators\\
6: Generalized Exponents\\
References
\end{center}
\section*{Abstract}
For the simplie Lie algebra $\mg = sl(n, \mC)$ we find a set of generators and relations for the classical family algebra $(End(\mg)\otimes S(\mg))^G$ as an algebra over the ring $I(\mg)$. From these we can then determine a $I(\mg)$-linear basis of the family algebra, and thus the generalized exponents of the irreducible components of $End(\mg)$ viewed as a $\mg$-module.
\section*{Notation}
\noindent $\mg$ a simple Lie algebra over $\mC$\\
$G$ the corresponding adjoint Lie group to $\mg$\\
Unless marked with a weight, $\pi$ indicates the defining representation.\\
$\omega_i$ are fundamental weights of $\mg$, $\omega_1$ corresponds to the defining representation $\pi$
All traces are taken with respect to the defining representation.\\
$x_\alpha,x_\beta,\ldots$ Greek letters indicate elements of $\mg$\\
$k,l,n,d$ Roman letters indicate integers\\
$\{x_\alpha\}$ a basis of $\mg$, $\{x^\beta\}$ the dual basis such that $\left\langle x^\beta,x_\alpha\right\rangle = \delta_\alpha^\beta$\\
$K_{\alpha\beta} = tr(\pi(x_\alpha)\pi(x_\beta))$. $K^{\alpha\beta}$ is the corresponding entry in the matrix inverse of $K$. $K_{\alpha\beta}$ is the Killing form up to a scaling, so we simply refer to both as the Killing form.\\
$\ptl^\alpha = \frac{\ptl}{\ptl x_\alpha}$, considering $x_\alpha$ as coordinate functions on $\mg^*$
\section*{1: The Problem}
Given a simple Lie algebra $\mg$ over $\mC$, we can consider the universal enveloping algebra $U(\mg)$ as a $\mg$-module under the action induced from the adjoint action on itself. As a $\mg$-module, $U(\mg)$ is isomorphic to $Sym(\mg) = Pol(\mg^*)$, the polynomial functions on $\mg^*$, and in turn $\mC[\mg^*]$, $\mC$ adjoin the basis elements of $\mg$, treated as commuting variables.\\
As a $\mg$-module, $\mC[\mg^*]$ splits as $I(\mg)\otimes H(\mg)$, where $I(\mg) = \mC[\mg^*]^G$ is the subalgebra of polynomials invariant under the adjoint action, and $H(\mg)$ is the subalgebra of polynomials annihilated by all left-invariant differential operators on $G = \exp(\mg)$ with constant coefficients and no constant term. The structure of $I(\mg)$ is well-known [11], so the interesting question is about $H(\mg)$.\\
As $H(\mg)$ is a graded $\mg$-module, it decomposes into homogeneous irreducible finite-dimensional $\mg$-modules, and the degrees in which a module of a given isotopy type appears are called the generalized exponents of that module. Thus finding the generalized exponents of the modules gives insight into the structure of $H(\mg)$, and thus into $\mC[\mg^*], Sym(\mg)$ and $U(\mg)$ [7].\\
The generalized exponents can be calculated via the $q$-analogue of Kostant's multiplicity formula ([3,4,5,8,10]), expanding modules via resolutions by Verma modules, but this is unwieldy due to needing to sum over the Weyl group of $\mg$ and the calculation of the Kostant partition function [6]. Instead, Kirillov found a set of algebras that contain finitely many different isotypic components as $\mg$-modules, so that the structures of these algebras could yield another way to calculate the generalized exponents.\\
Given a finite dimensional representation $(\pi_\lambda, V_\lambda)$ of $\mg$ we can look at the endomorphisms of $V_\lambda$ as a vector space, $End(V_\lambda)$. $\pi_\lambda$ induces a representation $(\Pi_\lambda,V_\lambda)$ of $G$. Then $G$ acts on $End(V_\lambda)$ by an action $\widetilde{\pi_\lambda}(g)(A) = \Pi_\lambda(g)A\Pi_\lambda(g^{-1})$ and on $\mC[\mg^*]$ by an action induced from the adjoint action on $\mg$, and hence acts on $End(V_\lambda)\otimes \mC[\mg^*]$. In [6], Kirillov defines the classical family algebra 
$$C_\lambda(\mg) = (End(V_\lambda)\otimes \mC[\mg^*])^G$$ 
the elements of $End(V_\lambda)\otimes \mC[\mg^*]$ invariant under this action. The invariant polynomials $I(\mg)$ inject into this algebra as $Id_{V_\lambda}\otimes P$ for $P \in I(\mg)$, and hence we get that $C_\lambda(\mg)$ is a free $I(\mg)$-module. The image of $I(\mg)$ is central so we treat $I(\mg)$ as the scalar ring of the family algebra.\\
We can consider $End(V_\lambda)$ as a $\mg$-module, and hence it decomposes into a number of irreducible $\mg$-modules $End(V_\lambda)_\mu$ with highest weight $\mu$. An element $A \in C_\lambda(\mg)$ is called harmonic if there is a $\mu$ such that $A \in (End(V_\lambda)_\mu\otimes H(\mg))^G$ and $A$ is homogeneous. Since $A$ is $\mg$-invariant, its polynomial part is in $H(\mg)_{w_{max}(-\mu)}$, which is isomorphic as an $\mg$-module to $H(\mg)_\mu$ via the Killing form. Hence the existence of a non-zero harmonic element $A$ with corresponding weight $\mu$ implies the existence of a $V_\mu$-isotypic component of $H(\mg)$ that is homogeneous with degree $\deg(A)$. In fact, a $I(\mg)$-linear basis of $C_\lambda(\mg)$ comprised of harmonic elements has exactly one basis element for each generalized exponent of the subrepresentations of $End(V_\lambda)$.\\
In this paper we will be finding the algebraic structure of $C_{\omega_1+\omega_{n-1}}(sl(n,\mC))$, the classical family algebra for the adjoint representation ($V = \mg$) of the algebras $sl(n) = A_{n-1}$. For $n = 2$ and $3$, the structure of the family algebra has already been worked out by [9], so this paper will assume $n\geq 4$. As an example, we will use $n=4$.
\section*{2: Generators and Relations}
Consider first the family algebra for $\pi_\lambda = \pi$, the defining representation; thus $V$ is $n$-dimensional, $End(V)$ is the set of $n\times n$ matrices and the family algebra consists of polynomial-valued $n\times n$ matrices. In this algebra define $F = \pi(x_\alpha)\otimes K^{\alpha\beta}x_\beta$, an $n\times n$ matrix whose entries are degree $1$ polynomials in $\mg^*$. Note that $tr(F^2) = K^{\alpha\beta}x_\alpha x_\beta$. We then define the symmetrized Casimir elements $c_k = tr(F^k)$ and, using the Cayley-Hamilton theorem, define $d_k$ by
$$F^n = \dsum_{k=0}^{n-2}d_{n-k}F^k$$
Note that $c_0 = n$, $c_1 = d_{n-1} = 0$, and that we can go from the $c_k$ to the $d_{k}$ via the Newton formulae.  By [11], $I(\mg) = \mC[c_2,\ldots,c_n]$. For the rest of this paper, $c_k$ is considered to have degree $k$.\\
${ }$\\
Example: for $n = 4$, $d_1 = 0, d_2 = \frac{c_2}{2}$, $d_3 = \frac{c_3}{3}$, $d_4 = \frac{c_4}{4}-\frac{c_2^2}{8}$.\\
${ }$\\
Switching now to the family algebra for the adjoint representation, we have the isomorphism $End(\mg) \cong \mg\otimes \mg^*$ and hence elements of the family algebra can be written as 
$$x_\alpha\otimes x^\beta \otimes P_\beta^\alpha$$ 
for $x_\alpha \in \mg$, $x^\beta \in \mg^*$, and hence $x_\alpha\otimes x^\beta \in End(\mg)$, and $P_\beta^\alpha \in \mC[\mg^*]$. Using this isomorphism, we can refer to the $x_\alpha$ and $x^\beta$ as coordinates for the entries in a matrix in $End(\mg)$. We can also write an element of the family algebra just by specifying the polynomial part, $\{P_\beta^\alpha\}$, with the coordinates implied by the uncontracted indices on the polynomial part. Multiplication of elements of the algebra is given by
$$(x_\alpha\otimes x^\beta \otimes P_\beta^\alpha)(x_\gamma\otimes x^\delta\otimes Q^\gamma_\delta) = x_\alpha\otimes x^\delta \otimes P_\beta^\alpha Q_\delta^\beta$$
or, in terms of just the polynomial data,
$$(\{P_\beta^\alpha\}\{Q_\delta^\gamma\})_\zeta^\epsilon = \{P^\epsilon_\eta Q^\eta_\zeta\}$$
If every polynomial in the polynomial part of an element of the family algebra is of degree $k$, we say that the elemet of the family algebra has degree $k$.
\begin{Theorem}\textbf{(Generators)} The following three elements generate the family algebra over $I(\mg)$:
$$L = x_\alpha\otimes x^\beta \otimes K^{\alpha\gamma}tr(\pi(x_\beta) \pi(x_\gamma) F)$$
$$R = x_\alpha\otimes x^\beta \otimes K^{\alpha\gamma}tr(\pi(x_\gamma) \pi(x_\beta) F)$$
$$S = x_\alpha\otimes x^\beta \otimes K^{\alpha\gamma}x_\beta x_\gamma $$
where the traces are taken considering $\pi(x_\beta), \pi(x_\gamma)$ and $F$ as polynomial-valued $n\times n$ matrices.
\end{Theorem}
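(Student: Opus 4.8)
The plan is to reduce everything to the invariant theory of the defining representation, where generation is controlled by the First Fundamental Theorem and Cayley--Hamilton. The three generators have transparent meanings as operators on $\mg$ built from the generic matrix $F$: using the inclusion $\mg = sl(n,\mC) \subset gl(n,\mC) = End(\mC^n)$ to view $End(\mg)$ inside $End(gl(n,\mC))$, one checks directly from the definitions that $L$ is the traceless projection of left multiplication $A \mapsto F A$, that $R$ is the traceless projection of right multiplication $A \mapsto A F$, and that $S$ is the traceless projection of the rank-one contraction $A \mapsto tr(F A)\,F$. Indeed, writing the action of $L$ on $v = x_\beta$ out gives the $\mg$-component of $F\pi(v)$, and similarly for $R$ and $\pi(v)F$, so in particular $L - R$ is the operator $\mathrm{ad}$ attached to the generic element. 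Thus the $I(\mg)$-subalgebra generated by $L,R,S$ is exactly the image, under projection onto $\mg$, of all operators on $gl(n,\mC)$ built from left multiplication, right multiplication, and trace-contraction against powers of $F$.

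Next I would invoke the First Fundamental Theorem. An arbitrary element of the family algebra is a $G$-equivariant polynomial map $\mg \rar End(\mg)$, which after the inclusion above becomes an equivariant map $\mg \rar End(gl(n,\mC))$ depending polynomially on $F$. Applying the FFT for $GL(n)$ to $End(gl(n,\mC)) \cong gl(n,\mC)\otimes gl(n,\mC)^*$ with the single matrix variable $F$, every such map is an $I(\mg)$-combination of the two index-contraction patterns: the ``through'' operators $A \mapsto F^a A F^b$ and the ``trace'' operators $A \mapsto tr(F^c A)\,F^d$. Because our operators carry equally many covariant and contravariant $\mC^n$-indices, any determinant ($\epsilon$-tensor) contractions that the $SL(n)$ version would contribute occur only in cancelling pairs and collapse to Kronecker-delta contractions by the identity expressing $\epsilon\,\ovl{\epsilon}$ as an alternating sum of products of deltas; hence the $GL(n)$ list is complete, and the extra $SL(n)$ data reenters only through the characteristic polynomial, i.e.\ through $I(\mg)$.

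Restricting input and output back to $\mg$ then prunes this list. Since elements of $\mg$ are traceless, every operator with $c=0$ (a bare $tr(A)$) vanishes, and since the traceless projection kills scalar matrices, every operator whose output is $F^0 = Id$ (the case $d=0$) also vanishes. What survives are the projections of $A\mapsto F^a A F^b$ and of $A \mapsto tr(F^c A)\,F^d$ with $c,d\ge 1$. The first family is generated under composition by $L$ and $R$; the second is generated by sandwiching $S$, since $L^{\,d-1} S\, R^{\,c-1}$ reproduces $A \mapsto tr(F^c A)\,F^d$ up to traceless-projection corrections that are of strictly lower degree in $F$ and are therefore absorbed by induction on the polynomial degree. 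Finally the Cayley--Hamilton relation $F^n = \dsum_{k=0}^{n-2} d_{n-k}F^k$, with the $d_j\in I(\mg)$, replaces each power $F^m$ for $m\ge n$ by an $I(\mg)$-combination of lower powers, so that only finitely many products of $L,R,S$ are needed and the generation statement follows.

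The step I expect to carry the real weight is the passage from $gl(n,\mC)$ to $sl(n,\mC)$: one must track the traceless projection carefully enough to guarantee both that projecting the $GL(n)$-FFT generators genuinely lands back inside the family algebra, and that the correction terms the projection introduces are strictly lower degree in $F$, so that the induction on degree actually terminates. A secondary point of care is phrasing the equivariant (rather than scalar-valued) form of the First Fundamental Theorem precisely for a single generic traceless matrix, and confirming the $\epsilon$-pairing reduction so that no genuinely new $SL(n)$-operator is overlooked. Once this bookkeeping is in place, the enumeration of surviving generators and the Cayley--Hamilton truncation are routine, and they moreover anticipate the relations established in Section 4.
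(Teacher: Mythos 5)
Your proposal is correct and is essentially the paper's own argument recast in operator language: your dichotomy between through operators $A\mapsto F^aAF^b$ and trace operators $A\mapsto tr(F^cA)\,F^d$ is exactly the paper's case split on whether the two matrix coordinates attach to the same trace or to different traces, and your induction absorbing the traceless-projection corrections is precisely the content of its Lemma 2 (the expansion of $L_k$, $R_k$ as products of $L$, $S$, $R$ with Casimir coefficients). The only real difference is where the classical input is sourced: you invoke the $GL(n)$ equivariant FFT explicitly (with the $\epsilon\overline{\epsilon}$-to-deltas reduction disposing of genuinely $SL(n)$ contractions), whereas the paper cites Cvitanovic for the fact that all invariants are built from traces and the Killing form and then removes trace-to-trace contractions by hand with the projector $\delta_a^d\delta_c^b-\frac{1}{n}\delta_a^b\delta_c^d$ (its Lemma 1).
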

The polynomial parts of $L$ and $R$ are degree $1$ polynomials, as $\pi(x_\beta)$ and $\pi(x_\gamma)$ both have scalar entries and $F$ takes entries in degree $1$ polynomials. The polynomial parts of $S$ are degree $2$ polynomials.\\
For the relations, first we define for concision the following notation:\\
Let
$$L_k = L^k + \frac{1}{n}\left(\dsum_{\lambda\vdash k,|\lambda| > 1}\left(\dprod_{i>2}^{|\lambda|}\frac{c_{\lambda_i}}{n}\right)L^{\lambda_1-1}SR^{\lambda_2-1}\right)$$
$$R_k = R^k + \frac{1}{n}\left(\dsum_{\lambda\vdash k,|\lambda| > 1}\left(\dprod_{i>2}^{|\lambda|}\frac{c_{\lambda_i}}{n}\right)L^{\lambda_1-1}SR^{\lambda_2-1}\right)$$
where $\lambda$ ranges over compositions. As will be shown below, 
$$L_k = x_\alpha\otimes x^\beta \otimes K^{\alpha\gamma}tr(\pi(x_\beta)\pi(x_\gamma)F^k)$$
$$R_k = x_\alpha\otimes x^\beta \otimes K^{\alpha\gamma}tr(\pi(x_\gamma)\pi(x_\beta)F^k)$$
${ }$\\
Example: for $n = 4$, 
$$L_1 = L,\text{ }  R_1 = R, \text{ } L_2 = L^2+\frac{1}{4}S, \text{ } R_2 = R^2+\frac{1}{4}S$$
$$L_3 = L^3+\frac{1}{4}(LS+SR), \text{ } R_3 = R^3+\frac{1}{4}(LS+SR)$$
$$L_4 = L^4+\frac{1}{4}(L^2S+LSR+SR^2)+\frac{1}{16}c_2S$$
$$R_4 = R^4+\frac{1}{4}(L^2S+LSR+SR^2)+\frac{1}{16}c_2S$$
${ }$\\
For all $n$, we get the following relations:
\begin{Theorem}\textbf{(Relations)} The following relations are sufficient for defining the family algebra with the generators above
$$LR = RL, \ LS = RS, \ SL = SR$$
$$SL^kR^lS = -n\dsum_{\lambda\vdash k+l+2} \dprod_i^{|\lambda|} \left(-\frac{c_{\lambda_i}}{n}\right) S$$
$$L_n = \dsum_{k=0}^{n-2} d_{n-k}L_k, \ R_n = \dsum_{k=0}^{n-2} d_{n-k}R_k$$
$$\dsum_{l=0}^{n-1}L^{n-1-l}R^l=\dsum_{k=0}^{n-3}d_{n-k-1}\dsum_{l=0}^kL^{k-l}R^l$$
\end{Theorem}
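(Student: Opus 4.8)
The plan is to translate the abstract family algebra into a concrete model of polynomial-valued operators on $\mg = sl(n,\mC)$, regarded as traceless $n\times n$ matrices. Writing $\Pi_0(Y) = Y - \frac{1}{n}tr(Y)I$ for the projection onto $\mg$ and using the $sl(n)$ completeness identity $K^{\alpha\gamma}\pi(x_\alpha)_{ij}\pi(x_\gamma)_{kl} = \delta_{il}\delta_{jk} - \frac{1}{n}\delta_{ij}\delta_{kl}$ together with $x_\delta = tr(\pi(x_\delta)F)$, a direct computation identifies the three generators, acting on $X \in \mg$, as $L(X) = \Pi_0(FX)$, $R(X) = \Pi_0(XF)$, and $S(X) = tr(FX)\,F$, and more generally $L_k(X) = \Pi_0(F^kX)$ and $R_k(X) = \Pi_0(XF^k)$. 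Under this dictionary the algebra product is composition of operators, so every proposed relation becomes an identity of $\mg$-endomorphisms with entries in $\mC[\mg^*]$; since the regular semisimple $F$ are Zariski dense it is enough to verify each one for generic $F$. The closed forms of $L_k$ and $R_k$ follow by induction from the recursion $L_k = L\,L_{k-1} + \frac{1}{n}S\,R_{k-2}$ (and its mirror, with $R_0 = \mathrm{Id}$, $R_{-1} = 0$), whose unwinding produces exactly the composition sum in their definition; relation three, $L_n = \sum_{k=0}^{n-2} d_{n-k}L_k$, is then immediate, being the Cayley--Hamilton identity $F^n = \sum_{k=0}^{n-2} d_{n-k}F^k$ multiplied by $X$ and projected.

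The first-line relations are read off from the model. One computes $L(R(X)) = \Pi_0(FXF) - \frac{1}{n}tr(FX)F = R(L(X))$, giving $LR = RL$; and since $S(X)$ is always a scalar multiple of $F$, on which $L$ and $R$ agree, one gets $LS = RS$, while $tr(F\,\Pi_0(FX)) = tr(F^2X) = tr(F\,\Pi_0(XF))$ gives $SL = SR$. For the middle relation, note first that $S(X)$ lies in the span of powers of $F$, on which $L$ and $R$ coincide, so $L^k R^l S = L^{k+l}S$ and hence $SL^kR^lS = Q_{k+l}\,S$ with $Q_m = tr(F\,L^m(F))$. Solving the linear recursion $L\,\Pi_0(F^j) = \Pi_0(F^{j+1}) - \frac{1}{n}c_j F$ yields the generating function $\sum_m Q_m t^m = nC(t)\,t^{-2}(n + C(t))^{-1}$, where $C(t) = \sum_{j\ge 2} c_j t^j$. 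The right-hand side of the relation is governed by the composition generating function $\sum_m \big(\sum_{\lambda\vdash m}\prod_i(-c_{\lambda_i}/n)\big)t^m = n(n+C(t))^{-1}$, and the elementary identity $nC/(n+C) + n^2/(n+C) = n$ shows the two series agree in every positive degree, which is precisely the relation $SL^kR^lS = -n\sum_{\lambda\vdash k+l+2}\prod_i(-c_{\lambda_i}/n)\,S$.

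The last relation I would prove spectrally. For generic diagonal $F = \mathrm{diag}(\mu_1,\dots,\mu_n)$ decompose $\mg$ into the root lines $\mC E_{ij}$ $(i\ne j)$ and the Cartan subalgebra $\mh$. On $E_{ij}$ one has $L\,E_{ij} = \mu_i E_{ij}$ and $R\,E_{ij} = \mu_j E_{ij}$, so the relation reduces to $\mu_i^n - \mu_j^n = \sum_{k=0}^{n-3} d_{n-1-k}(\mu_i^{k+1} - \mu_j^{k+1})$, which is nothing but $\chi(\mu_i) = \chi(\mu_j) = 0$ for the characteristic polynomial $\chi$ of $F$. On $\mh$ we have $L = R =: \Lambda$, acting as the rank-one update $D_\mu - \frac{1}{n}\mathbf{1}\mu^{\top}$ of the diagonal; the matrix-determinant lemma gives its characteristic polynomial on $\mC^n$ as $\frac{1}{n}t\,\chi'(t)$, so that $\Lambda$ satisfies $\chi'(\Lambda)=0$ on the invariant subspace $\mh$. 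Since $\chi'(t) = nt^{n-1} - \sum_{k=0}^{n-3}(k+1)d_{n-1-k}t^k$, this identity $\chi'(\Lambda) = 0$ is exactly the fourth relation restricted to $\mh$. The two eigenspace computations together establish it for generic $F$, hence identically.

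It remains to prove sufficiency. Using $LR = RL$, $LS = RS$, and $SL = SR$ one reduces any monomial to $L^aR^b$ or $L^aSR^b$; the relation $SL^kR^lS = (\cdots)S$ collapses every word containing two or more $S$'s to a scalar multiple of a single such term; and relations three and four bound the admissible exponents of $L$ and $R$. This produces a finite $I(\mg)$-spanning set, and the proof concludes by checking that its cardinality matches the $I(\mg)$-rank of $C_{\omega_1+\omega_{n-1}}(\mg)$ obtained from the harmonic decomposition. This final count---establishing that the listed relations are not merely valid but complete---is the main obstacle, with the spectral lemma identifying the Cartan characteristic polynomial as $\chi'$ the subtlest of the individual verifications; once the operator model and the generating-function identity are in place, the remaining relation checks are routine.
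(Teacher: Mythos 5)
Your verification of the relations is correct, and it takes a genuinely different route from the paper's. The paper does everything in index notation, inserting the $sl(n)$ projector $(x_\alpha)_a^b K^{\alpha\beta}(x_\beta)_c^d = \delta_a^d\delta_c^b-\frac{1}{n}\delta_a^b\delta_c^d$ term by term, and it derives the fourth relation only indirectly, from the algebraic dependence of $c_{n+1}$ on $c_2,\ldots,c_n$ (via reducibility of $\{K_{\beta\gamma}\ptl^\alpha\ptl^\gamma c_{n+1}\}$). Your operator dictionary $L(X)=\Pi_0(FX)$, $R(X)=\Pi_0(XF)$, $S(X)=tr(FX)\,F$ packages that projector once and for all; your generating-function identity $\sum_m Q_m t^m = nC(t)t^{-2}(n+C(t))^{-1}$ is correct and does match the composition sum on the right side of the $SL^kR^lS$ relation; and the spectral proof of the fourth relation (eigenvalues $\mu_i,\mu_j$ on root vectors, the rank-one update on the Cartan) is cleaner and more complete than what the paper offers. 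One repair is needed there: Cayley--Hamilton for $\Lambda$ on $\mC^n$ gives $\Lambda\chi'(\Lambda)=0$, not $\chi'(\Lambda)=0$. To drop the factor of $\Lambda$, note that $\ker\Lambda$ is spanned by $(\mu_1^{-1},\ldots,\mu_n^{-1})$, which for generic $\mu$ does not lie in the traceless hyperplane $\mh$, so $\mC^n=\ker\Lambda\oplus\mh$ is an invariant splitting and the characteristic polynomial of $\Lambda|_{\mh}$ is $\frac{1}{n}\chi'(t)$; Cayley--Hamilton applied on $\mh$ then gives $\chi'(\Lambda|_{\mh})=0$.

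The genuine gap is sufficiency, which is what the theorem actually asserts; your closing paragraph is a plan, not a proof, and as stated the plan does not close. Bounding exponents by the $L_n$, $R_n$ relations leaves the spanning set $\{L^aR^b,\ L^aSR^b:\ 0\le a,b\le n-1\}$, of size $2n^2$, while the $I(\mg)$-rank of the family algebra is $\dsum_\mu m_\lambda(\mu)^2 = n(n-1)+(n-1)^2 = 2n^2-3n+1$ (Kirillov's formula, which you never compute); the cardinalities differ by $3n-1$, so the ``check that they match'' step fails. The paper closes this gap with two further reductions you omit: (i) multiplying the fourth relation by $(L+R)^m$ (and squaring it for the top degree) expresses $L^{n-1}R^m$ in terms of the remaining monomials, removing one monomial in each degree $n-1,\ldots,2n-2$, i.e.\ $n$ monomials; (ii) since $LS=RS$ gives $L^{n-1}S=\frac{1}{n}\dsum_k L^kR^{n-1-k}S$, the fourth relation also reduces $L^{n-1}SR^b$ and $L^aSR^{n-1}$, removing $2n-1$ more. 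This leaves exactly $(n^2-n)+(n-1)^2=2n^2-3n+1$ monomials. Finally, to conclude that no further relations exist you need the freeness of the family algebra over $I(\mg)$ (from [6]): a generating set of a free module whose size equals the rank is a basis, so the surjection from the abstractly presented algebra onto $C_{\omega_1+\omega_{n-1}}(sl(n))$ is injective. Without (i), (ii), the rank computation, and this last step, the sufficiency half of the theorem --- its main content --- remains unproved.
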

Note that these relations are not independent. The $SL^kR^lS$ relations become redundant when $k+l$ exceeds $n-1$. The $L_n$ relation minus the $R_n$ relation gives the $L^kR^l$ relation times $L-R$, and hence the three relations are not independent. Here I give both the $L_n$ and $R_n$ relation because each of them is easier to prove individually than any linear combination of them that isn't a multiple of the $L^kR^l$ relation.\\
${ }$\\
Example: for $n = 4$, we get
$$SS = c_2S, \text{ } SLS = SRS = c_3S, \text{ } SL^2S = SLRS = SR^2S = (c_4-c_2^2/4)S$$
$$SL^3S = SL^2RS = SLR^2S = SR^3S = (c_5-\frac{2}{4}c_2c_3)S = \frac{c_2c_3}{3}S$$
$$L^4+\frac{1}{4}(L^2S+LSR+SR^2)+\frac{1}{16}c_2S = \frac{c_2}{2}(L^2+\frac{1}{4}S)+\frac{c_3}{3}L+\frac{c_4}{4}-\frac{c_2^2}{8}$$
$$R^4+\frac{1}{4}(L^2S+LSR+SR^2)+\frac{1}{16}c_2S = \frac{c_2}{2}(R^2+\frac{1}{4}S)+\frac{c_3}{3}R+\frac{c_4}{4}-\frac{c_2^2}{8}$$
$$L^3+L^2R+LR^2+R^3 = \frac{c_2}{2}(L+R)+\frac{c_3}{3}$$
\section*{3: Sufficiency of the Generators}
Note that for $T$ a $(1,k+1)$ tensor built from the Killing form, structure constants and Casimir elements and $P \in \mC[\mg^*]^G$, the expression 
$$x_\alpha\otimes x^\beta \otimes T_{\beta\gamma_1\ldots \gamma_k}^\alpha \ptl^{\gamma_1}\ldots\ptl^{\gamma_k} P$$ will always be invariant and hence an element of the family algebra, since $\ptl^\alpha$ transforms as a $(1,0)$ tensor.\\
We define $(x_\alpha)^b_a$ to be the $a,b$ entry of $\pi(x_\alpha)$. We define the object 
$$t_{\alpha_1\alpha_2\cdots \alpha_k} = tr(\pi(x_{\alpha_1})\pi(x_{\alpha_2})\cdots\pi(x_{\alpha_k})) = (x_{\alpha_1})_{a_1}^{a_2}(x_{\alpha_2})_{a_2}^{a_3}\cdots (x_{\alpha_k})_{a_k}^{a_1}$$
so that
$$c_k = tr(F^k) = t_{\alpha_1\alpha_2\cdots \alpha_k}\prod_i K^{\alpha_i\beta_i}x_{\beta_i}$$
Note that the structure constants, defined by $[x_\alpha,x_\beta] = f_{\alpha\beta}^\gamma x_\gamma$, can be written as 
$$f_{\alpha\beta}^\gamma = CK^{\delta\gamma}(t_{\alpha\beta\delta}-t_{\beta\alpha\delta})$$ 
for some constant $C$. Also note that any trace that isn't fully symmetrized can be expressed in terms of symmetrized traces and structure constants [1] so all of the invariants can be written in terms of traces and the Killing form. Hence our generators, built from traces and the Killing form, are all in the family algebra.\\
For any Lie algebra $\mg$, we have a projection operator from $End(V) = V\otimes V^*$ to the adjoint subspace, given by
$$Proj_{a,c}^{b,d}(v_b\otimes v^a) = v_d\otimes v^c \in \mg$$
For $sl(n)$, this projector 
$$Proj_{a,c}^{b,d} = (x_\alpha)_a^bK^{\alpha\beta}(x_\beta)_c^d = \delta_a^d\delta_c^b - \frac{1}{n}\delta_a^b\delta_c^d$$ 
which can be read as removing the trace/$n$ times the identity  from operators from $V$ to $V$, i.e. the standard projection from $gl(n) = End(V)$ to $sl(n)$.\\
${ }$\\
The proof that the generators given generate the whole algebra follows from two lemmas.
\begin{Lemma}
Any expression of traces contracted to each other via the Killing form can be expressed as a polynomial in uncontracted traces
\end{Lemma}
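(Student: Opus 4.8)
The plan is to eliminate, one at a time, every Killing-form contraction that links two trace-indices, using the completeness relation for $sl(n)$ displayed just above the statement. Writing a product of traces out in matrix-entry form $(x_\alpha)_a^b$, a contraction of the Lie-algebra index $\alpha$ of one trace against the index $\beta$ of another trace (or of a second slot of the same trace) via $K^{\alpha\beta}$ appears exactly as the combination $(x_\alpha)_a^b K^{\alpha\beta}(x_\beta)_c^d$. The projector identity $(x_\alpha)_a^b K^{\alpha\beta}(x_\beta)_c^d = \delta_a^d\delta_c^b - \frac{1}{n}\delta_a^b\delta_c^d$ rewrites this purely in terms of Kronecker deltas on the defining-representation indices $a,b,c,d$, so the single $K^{\alpha\beta}$ and the two matrices it joined are removed and replaced by a gluing of index lines. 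This is the only structural input needed, and it is here that the tracelessness of $sl(n)$ enters, through the $-\frac{1}{n}$ term.

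First I would set up an induction on the number $N$ of Killing contractions that join two trace-indices, counting a self-contraction of a single trace as one such contraction. When $N=0$ there is nothing to prove. For $N>0$ I would select one such contraction and substitute the identity above. The $\delta_a^d\delta_c^b$ term splices the two matrix strings together in the ``double-line'' fashion: if the two indices sit in distinct traces it fuses those traces into a single longer trace, and if they sit in a common trace it either shortens that trace or severs it into two. The $-\frac{1}{n}\delta_a^b\delta_c^d$ term instead closes off each string at the site of the removed generator, which again only shortens or splits traces. In every resulting term the chosen $K^{\alpha\beta}$ has vanished and no fresh trace-to-trace contraction has been created, so each term carries exactly $N-1$ such contractions; applying the inductive hypothesis term by term completes the reduction. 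The surviving indices, being untouched by the identity, remain either free or contracted against the coordinate functions $x_\beta$, so the output is a finite sum of products of traces in which no two trace-indices are contracted to one another — precisely a polynomial in uncontracted traces.

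The main obstacle, and really the only point demanding care, is the combinatorial bookkeeping of how the two delta terms act on the cyclic word structure of the traces. I would verify explicitly that splicing two traces, shortening a trace, and severing a trace each remove the selected $K^{\alpha\beta}$ without introducing any new trace-to-trace contraction, so that $N$ strictly decreases in each of the finitely many terms produced. Once that is confirmed the induction is well-founded and termination is automatic, and the conclusion follows.
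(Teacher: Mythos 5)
Your proposal is correct and follows essentially the same route as the paper: both hinge on the projector identity $(x_\alpha)_a^b K^{\alpha\beta}(x_\beta)_c^d = \delta_a^d\delta_c^b - \frac{1}{n}\delta_a^b\delta_c^d$, which fuses two traces into one (or splits/shortens a single trace) minus $\frac{1}{n}$ times a product of traces, and then iterates until no Killing-form contractions between traces remain. Your explicit induction on the number of contractions simply formalizes the termination argument the paper leaves implicit.
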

\begin{proof}
If we have $t_{\alpha_1\cdots\alpha_k}K^{\alpha_1\beta_1}t_{\beta_1\cdots\beta_l}$, we can use the projection operator to get 
\begin{align*}
t_{\alpha_1\cdots\alpha_k}K^{\alpha_1\beta_1}t_{\beta_1\cdots\beta_l} = & \ (x_{\alpha_1})_{a_1}^{a_2}\cdots (x_{\alpha_k})_{a_k}^{a_1}K^{\alpha_1\beta_1}(x_{\beta_1})_{b_1}^{b_2}\cdots (x_{\beta_l})_{b_l}^{b_1}\\
= & \ (\alpha_2)_{a_2}^{a_3}\cdots (x_{\alpha_k})_{a_k}^{a_1}(x_{\beta_2})_{a_1}^{b_3}\cdots (x_{\beta_l})_{b_l}^{a_2}\\ 
&\ - \frac{1}{n}(\alpha_2)_{a_1}^{a_3}\cdots (x_{\alpha_k})_{a_k}^{a_1}(x_{\beta_2})_{b_1}^{b_3}\cdots (x_{\beta_l})_{b_l}^{a_1}\\
= & \ t_{\alpha_2\cdots \alpha_k\beta_2\cdots\beta_l}-\frac{1}{n}t_{\alpha_2\cdots \alpha_k}t_{\beta_2\cdots\beta_l}
\end{align*}
A contraction between a trace and itself can also be rewritten, giving
$$t_{\alpha_1\cdots \alpha_i\cdots \alpha_k}K^{\alpha_1\alpha_i} = t_{\alpha_2\cdots\alpha_{i-1}}t_{\alpha_{i+1}\cdots\alpha_k}-\frac{1}{n}t_{\alpha_2\cdots\alpha_{i-1}\alpha_{i+1}\cdots\alpha_k}$$
Hence any expression in traces contracted with the Killing form which has only lower indices can be rewritten as a polynomial in traces.
\end{proof}
So we assume from now on that no traces are contracted to any other traces.\\
To get from a tensor built from traces to an element of the family algebra, we simply pick one index to be $x_\alpha$, pick a second index $x^\beta$, raising it with the Killing form, and then contract the rest of the indices to elements of $\mg$ and symmetrize them.
\begin{Lemma}
$$L_k = \{K^{\alpha\gamma}tr(\pi(x_\beta)\pi(x_\gamma)F^k)\}$$
$$R_k = \{K^{\alpha\gamma}tr(\pi(x_\gamma)\pi(x_\beta)F^k)\}$$
\end{Lemma}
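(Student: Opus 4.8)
The plan is to prove both identities simultaneously by strong induction on $k$, using a multiplication recursion inside the family algebra that comes directly from the $sl(n)$ projection operator. Since $R_k$ is obtained from $L_k$ by reversing the order of $\pi(x_\beta)$ and $\pi(x_\gamma)$, the two cases are mirror images and it is enough to carry out the argument for $L_k$. Throughout I abbreviate $\hat L_k = \{K^{\alpha\gamma}tr(\pi(x_\beta)\pi(x_\gamma)F^k)\}$ and $\hat R_k = \{K^{\alpha\gamma}tr(\pi(x_\gamma)\pi(x_\beta)F^k)\}$ for the trace-side expressions, so the goal is to show $\hat L_k = L_k$ and $\hat R_k = R_k$.

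First I would record the three elementary facts that drive the computation. Contracting a single matrix against $F$ gives $tr(\pi(x_\gamma)F) = K_{\gamma\mu}K^{\mu\nu}x_\nu = x_\gamma$, since $tr(\pi(x_\gamma)\pi(x_\mu)) = K_{\gamma\mu}$. Because $x_\eta K^{\eta\gamma}\pi(x_\gamma) = F$, contracting a free lower index of $\hat R_m$ against $x_\eta$ absorbs one power of $F$, i.e. $x_\eta(\hat R_m)^\eta_\zeta = tr(\pi(x_\zeta)F^{m+1})$, with the mirror statement for $\hat L_m$. Finally, the projector identity $(x_\eta)_a^b K^{\eta\delta}(x_\delta)_c^d = \delta_a^d\delta_c^b - \frac1n\delta_a^b\delta_c^d$ from this section is what converts the internal Killing contraction of a family-algebra product into a sum of two pieces.

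The heart of the argument is then the recursion, valid for $k\geq 2$. Forming the family-algebra product $L\cdot\hat L_{k-1}$, matrix multiplication contracts the inner $\mg$-index through a Killing form; applying the projector splits the result in two. The $\delta_a^d\delta_c^b$ term cyclically fuses the length-one trace of $L$ with the length-$(k-1)$ trace of $\hat L_{k-1}$ into the single length-$k$ trace $\hat L_k$, while the $-\frac1n\delta_a^b\delta_c^d$ term factors the product as $tr(\pi(x_\gamma)F)\,tr(\pi(x_\zeta)F^{k-1})$, which by the identities above equals $\frac1n(S\hat R_{k-2})^\epsilon_\zeta$. This yields
$$\hat L_k = L\hat L_{k-1} + \frac1n S\hat R_{k-2},$$
and the mirror computation gives the corresponding recursion for $\hat R_k$. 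With the base cases $\hat L_0 = \{K^{\alpha\gamma}K_{\gamma\beta}\} = \{\delta^\alpha_\beta\}$ and $\hat L_1 = L$ (and likewise for $R$), the induction hypothesis $\hat L_{k-1} = L_{k-1}$, $\hat R_{k-2} = R_{k-2}$ reduces the Lemma to the purely algebraic identity $L_k = L\,L_{k-1} + \frac1n S\,R_{k-2}$ among the defined symbols.

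Verifying that last identity is where I expect the real work to lie. Expanding the definitions, $L\,L_{k-1}$ reproduces exactly the single-$S$ terms of $L_k$ whose leading composition part is at least $2$, since prepending $L$ sends a composition $\lambda$ of $k-1$ to $(\lambda_1+1,\lambda_2,\dots)$. In $\frac1n S R_{k-2}$, the leading piece $\frac1n SR^{k-2}$ supplies the composition $(1,k-1)$, while the correction terms produce double-$S$ words $SL^aSR^b$ that must be collapsed back to single-$S$ words. Here I would use that $S = \{x^\epsilon x_\zeta\}$ factors as a covector times a vector, so every sandwich $SXS$ equals $\langle X\rangle S$ with $\langle X\rangle = x_\eta X^\eta_\theta x^\theta \in I(\mg)$; this is the content of the $SL^kR^lS$ relation, and it converts each double-$S$ word into a Casimir multiple of a single-$S$ word, accounting for the first-part-one compositions with three or more parts. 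The main obstacle is the bookkeeping: one must check that after this collapse the Casimir factors reorganize into exactly $\prod_{i>2}c_{\lambda_i}/n$ summed over all compositions $\lambda\vdash k$, tracking the $1/n$ normalizations and the vanishing of every term carrying a factor $c_1 = 0$.
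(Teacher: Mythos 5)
Your proposal follows the same route as the paper's own proof: strong induction on $k$ powered by the $sl(n)$ projector, and the recursion you derive, $\hat{L}_k = L\hat{L}_{k-1}+\frac{1}{n}S\hat{R}_{k-2}$, is exactly the engine of the paper's argument, with correct derivation and base cases. The only stylistic difference is at the second term: the paper evaluates $S\hat{R}_{k-2}$ once and for all as the trace expression $\{K^{\alpha\gamma}x_\gamma\,tr(\pi(x_\beta)F^{k-1})\}$ using the induction hypothesis, whereas you expand $SR_{k-2}$ symbolically and collapse double-$S$ words via the rank-one identity $SXS=\langle X\rangle S$ (which is a legitimate, self-contained justification, independent of Theorem 2). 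Both you and the paper then reduce the Lemma to the combinatorial claim that this recursion reproduces the stated composition formula, and both leave that claim unchecked; you at least flag it as the real work.

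That deferred bookkeeping is a genuine gap, and not because it is tedious: it fails, starting at $k=6$, so the plan cannot be completed as stated. Your collapse factors $SS=c_2S$, $SLS=c_3S$, $SL^2S=\big(c_4-\frac{c_2^2}{n}\big)S$ carry lower-order pieces that exactly cancel every would-be multi-Casimir term; for example
$$SR_4 = SR^4+\frac{1}{n}\Big(SL^2S+SLSR+SSR^2+\frac{c_2}{n}SS\Big) = SR^4+\frac{1}{n}\big(c_2SR^2+c_3SR+c_4S\big),$$
the two $c_2^2$ contributions cancelling. Since every term of $LL_5$ begins with $L$, the coefficient of the word $S$ in $LL_5+\frac{1}{n}SR_4$ is $\frac{c_4}{n^2}$; but in the defined $L_6$ the compositions $(1,1,4)$ and $(1,1,2,2)$ make that coefficient $\frac{c_4}{n^2}+\frac{c_2^2}{n^3}$. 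So the target identity $L_k = LL_{k-1}+\frac{1}{n}SR_{k-2}$ is false at $k=6$, and since the induction gives $LL_5+\frac{1}{n}SR_4=\hat{L}_6$, the Lemma itself fails at $k=6$ with the paper's literal definition of $L_k$. (Cross-check: directly from traces, $S\hat{L}_kS=c_{k+2}S$ exactly, while the composition formula would force $SL_6S=\big(c_8+\frac{c_2^4}{n^3}\big)S$.) What your recursion actually proves is the Lemma for the amended definition in which the correction sum runs only over compositions with at most three parts, the third part (when present) being at least $2$ and contributing the single factor $c_{\lambda_3}/n$; multi-Casimir products never occur. Be aware that the paper's proof buries the same false assertion in the sentence that $SR_{k-2}$ ``contains exactly the terms that do not'' contain an $L$; your approach, pushed through honestly, is precisely what exposes it, so the correct conclusion of your write-up is a corrected statement of the Lemma rather than a proof of the stated one.
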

\begin{proof}
For $k = 1$, the desired identity follows by definition.\\
For $k = 2$:
$$L^2 = \{K^{\alpha\gamma}tr(\pi(x_\beta)\otimes \pi(x_\delta)F)K^{\delta\epsilon}tr(\pi(x_\epsilon)\pi(x_\gamma)F\}$$
Replacing the $x_\delta K^{\delta\epsilon}x_\epsilon$ with the adjoint representation projector, we get
$$\{K^{\alpha\gamma}tr(\pi(x_\beta)\pi(x_\gamma)F^2)\}-\{\frac{1}{n}K^{\alpha\gamma}tr(\pi(x_\beta)F)tr(\pi(x_\gamma)F)\}$$
The second term simplifies to $\{\frac{1}{n}K^{\alpha\gamma}x_\beta x_\gamma\} = \frac{1}{n}S$. Hence we get that 
$$L_2 = \{K^{\alpha\gamma}tr(\pi(x_\beta)\pi(x_\gamma)F^2)\}, \text{ } R_2 = \{K^{\alpha\gamma}tr(\pi(x_\beta)\pi(x_\gamma)F^2)\}$$
Now suppose that for all $l < k$,
$$L_{l} = \{K^{\alpha\gamma}tr(\pi(x_\beta)\pi(x_\gamma)F^{l})\}, \text{ } R_{l} = \{K^{\alpha\gamma}tr(\pi(\pi(x_\gamma)x_\beta)F^l)\}$$
We write $L^k = LL^{k-1}$, which we can expand as
$$LL_{k-1} - L\left(\dsum_{\lambda\vdash k-1,|\lambda| > 1}\frac{1}{n}\left(\dprod_{i>2}^{|\lambda|}\frac{c_{\lambda_i}}{n}\right)L^{\lambda_1-1}SR^{\lambda_2-1}\right)$$
Considering the first term, we get
$$LL_{k-1} = \{K^{\alpha\gamma}tr(\pi(x_\delta)\pi(x_\gamma) F)K^{\delta\epsilon}tr(\pi(x_\beta)\pi(x_\epsilon)F^{k-1})\}$$
Expanding out $x_\delta K^{\delta\epsilon}x_\epsilon$ by the adjoint representation projector gives
$$\{K^{\alpha\gamma}tr(\pi(x_\beta)\pi(x_\gamma)F^k)\}-\{\frac{1}{n}K^{\alpha\gamma} tr(\pi(x_\beta)F)tr(\pi(x_\gamma)F^{k-1})\}$$
Since the identity holds for $k-2$, we get
\begin{align*}SR_{k-2} &= \{K^{\alpha\gamma}x_\gamma x_\beta\}\{K^{\delta\epsilon}tr(\pi(x_\epsilon)\pi(x_\zeta)F^{k-2})\}\\
&= \{K^{\alpha\gamma}x_\gamma x_\beta K^{\beta\epsilon}tr(\pi(x_\epsilon)\pi(x_\zeta)F^{k-2})\}\\
&= \{K^{\alpha\gamma}x_\gamma tr(F\pi(x_\zeta)F^{k-2})\}\\
&= \{K^{\alpha\gamma}x_\gamma tr(\pi(x_\zeta)F^{k-1})\}
\end{align*}
Hence the second term is $SR_{k-2}$.\\
Our expression for $\{K^{\alpha\gamma}tr(\pi(x_\beta)\pi(x_\gamma)F^k)\}$ is now $L^k$ plus $$SR_{k-2}+L\dsum_{\lambda\vdash k-1,|\lambda| > 1}\frac{1}{n}\left(\dprod_{i>2}^{|\lambda|}\frac{c_{\lambda_i}}{n}\right)L^{\lambda_1-1}SR^{\lambda_2-1}$$ 
These two expressions become 
$$\dsum_{\lambda\vdash k,|\lambda| > 1}\frac{1}{n}\left(\dprod_{i>2}^{|\lambda|}\frac{c_{\lambda_i}}{n}\right)L^{\lambda_1-1}SR^{\lambda_2-1}$$
since the second contains exactly the terms that contain an $L$ and $SR_{k-2}$ contains exactly the terms that do not.\\
Hence we get that 
$$\{K^{\alpha\gamma}tr(\pi(x_\beta)\pi(x_\gamma)F^k)\}= L^k+\dsum_{\lambda\vdash k-1,|\lambda| > 1}\frac{1}{n}\left(\dprod_{i>2}^{|\lambda|}\frac{c_{\lambda_i}}{n}\right)L^{\lambda_1-1}SR^{\lambda_2-1} = L_k$$ as desired, and similarly for $R_k$.
\end{proof}
\begin{proof} (Theorem 1)\\
By Lemma 1, we only need to worry about where $x_\alpha$ and $x^\beta$ are contracted to; everything else is traces. For a single term, the coordinate indices $x_\alpha$ and $x^\beta$ are either contracted to different traces, or the same trace. Assume there are no nontrivial Casimir elements involved.\\ 
In the first case, our object is of the form $\{K^{\alpha\gamma}tr(\pi(x_\gamma) F^k)tr(\pi(x_\beta) F^l)\}$. By Lemma 2 we can write this object as $L_{k-1}SR_{l-1}$. Note that $k$ and $l$ are both at least $1$, since if $k = 0$ then we'd get $tr(\pi(x_\gamma)F^k) = tr(\pi(x_\gamma)) = 0$, and similarly if $l = 0$.\\
Now suppose that the coordinates $x_\alpha$ and $x^\beta$ are contracted to the same trace. Since all the other indices are contracted to elements of $\mg$, by Lemma 2 we get the object $\{K^{\alpha\gamma}tr(\pi(x_\gamma) F^k \pi(x_\beta) F^l)\}$, which we can then write as $L_kR_l+\frac{1}{n}L_{k-1}SR_{l-1}$ via the adjoint representation projector.\\
Hence $L,S$ and $R$ generate the entire family algebra.
\end{proof}
\section*{4: Proof of the Relations}
First we prove that the relations hold:
\begin{proof} (Verifying the Relations)
\begin{align*}
LR &= \{K^{\alpha\gamma}tr(x_\gamma x_\beta F)\}\{K^{\delta\zeta}tr(x_\epsilon x_\zeta F)\}\\
&= \{K^{\alpha\gamma}tr(x_\gamma x_\beta F)K^{\beta\zeta}tr(x_\epsilon x_\zeta F)\}\\
&= \{K^{\alpha\gamma}tr(x_\gamma F x_\epsilon F)-\frac{1}{n}K^{\alpha\gamma}tr(x_\gamma F)tr(x_\epsilon F)\}\\
&= \{K^{\alpha\gamma}tr(F x_\gamma F x_\epsilon)-\frac{1}{n}K^{\alpha\gamma}tr(F x_\gamma)tr(F x_\epsilon)\}\\
&= \{K^{\alpha\gamma}tr(F x_\beta x_\gamma)K^{\beta\zeta}tr(F x_\zeta x_\epsilon\})\\
&= \{K^{\alpha\gamma}tr(x_\beta x_\gamma F)K^{\beta\zeta}tr(x_\zeta x_\epsilon F\})\\
&= \{K^{\alpha\gamma}tr(x_\beta x_\gamma F)\}\{K^{\delta\zeta}tr(x_\zeta x_\epsilon F)\}\\
&= RL
\end{align*}
\begin{align*}
LS &= \{K^{\alpha\gamma}tr(x_\gamma x_\beta F)\}\{K^{\delta\zeta}x_\epsilon x_\zeta\}\\
&= \{K^{\alpha\gamma}tr(x_\gamma x_\beta F)K^{\beta\zeta}x_\epsilon x_\zeta\}\\
&= \{K^{\alpha\gamma}tr(x_\gamma F^2)x_\epsilon\}\\
&= \{K^{\alpha\gamma}tr(F x_\gamma F)x_\epsilon\}\\
&= \{K^{\alpha\gamma}tr(x_\beta x_\gamma F)K^{\beta\zeta}x_\epsilon x_\zeta\}\\
&= \{K^{\alpha\gamma}tr(x_\beta x_\gamma F)\}\{K^{\delta\zeta}x_\epsilon x_\zeta\}\\
&= RS
\end{align*}
The relation $SL = SR$ follows analogously.\\
Since $RS = LS$ and $LR = RL$, $SL^kR^lS = SL^{k+l}S$. So we only prove the relation for this expression:
\begin{align*}
SL^kS &= \{K^{\alpha\gamma}x_\beta x_\gamma\}\{K^{\delta_1\zeta_1}tr\big(\pi(x_{\zeta_1})\pi(x_{\epsilon_1})F\big)\}\cdots\{K^{\delta_k\zeta_k}tr\big(\pi(x_{\zeta_k})\pi(x_{\epsilon_k}) F\big)\}\{K^{\eta\iota}x_\theta x_\iota\}\\
&= \{K^{\alpha\gamma}x_\beta x_\gamma K^{\beta\zeta_1}tr\big(\pi(x_{\zeta_1})\pi(x_{\epsilon_1})F\big)K^{\epsilon_1\zeta_2}\cdots K^{\epsilon_{k-1}\zeta_k}tr\big(\pi(x_{\zeta_k})\pi(x_{\epsilon_k})F\big)K^{\epsilon_k\iota}x_\theta x_\iota\}\\
&= \{K^{\alpha\gamma}x_\gamma x_\theta (tr\big(\pi(x_{\epsilon_1}) F^2\big)K^{\epsilon_1\zeta_2}tr\big(\pi(x_{\zeta_1})\pi(x_{\epsilon_2})F\big)\cdots K^{\epsilon_{k-1}\zeta_k}tr\big(\pi(x_{\zeta_k})F^2\big))\}\\
&= (tr\big(\pi(x_{\epsilon_1}) F^2\big)K^{\epsilon_1\zeta_2}tr\big(\pi(x_{\zeta_1})\pi(x_{\epsilon_2})F\big)K^{\epsilon_2\zeta_3}\cdots K^{\epsilon_{k-1}\zeta_k}tr\big(\pi(x_{\zeta_k})F^2\big))S
\end{align*}
where in the third equality we replace $x_\beta K^{\beta\zeta_1}\pi(x_{\zeta_1})$ and $\pi(x_{\epsilon_k}) K^{\epsilon_k\iota}x_\iota$ with $F$.\\ 
Now we just need to expand out the scalar:
$$tr\big(\pi(x_{\epsilon_1}) F^2\big)K^{\epsilon_1\zeta_2}tr\big(\pi(x_{\zeta_1})\pi(x_{\epsilon_2})F\big)K^{\epsilon_2\zeta_3}\cdots K^{\epsilon_{k-1}\zeta_k}tr\big(\pi(x_{\zeta_k}) F^2\big)$$
Each instance of the Killing form in the previous expression gives rise to an expansion via the adjoint representation projector, either connecting a pair of $F$s into a single trace or separating them into a product of two traces. Hence for each composition of $k+2$ we get a product of Casimir elements corresponding to that composition, where separations give a factor of $-\frac{1}{n}$. Thus we get the desired relation.\\
The next relation follows from Lemma 2. Since $d_{n-k}$ is a scalar, we can move it in and out of traces and hence by the Cayley-Hamilton theorem applied to $F$,
\begin{align*}
L_n&= \{K^{\alpha\gamma}tr\big(\pi(x_\beta)\pi(x_\gamma)F^n\big)\}\\
&= \{K^{\alpha\gamma}tr\big(\pi(x_\beta)\pi(x_\gamma)\dsum_k d_{n-k}F^k\big)\}\\
&= \dsum_k d_{n-k}\{K^{\alpha\gamma}tr\big(\pi(x_\beta)\pi(x_\gamma) F^k\big)\}\\
&= \dsum_k d_{n-k}L_k
\end{align*}
The same argument gives the $R_n$ relation.\\
The last relation follows from the fact that for $sl(n)$ the highest-degree algebraically independent Casimir element has degree $n$ so $c_{n+1}$ must reduce to a polynomial in $c_k$ for $k \leq n$. Hence the object written as $\{K_{\beta\gamma} \ptl^\alpha \ptl^\gamma c_{n+1}\}$ must be reducible. Writing out the full expression gives the $L^kR^l$ reduction relation.
\end{proof}
\begin{proof} (Sufficiency of the Relations)\\
Because of the relations $LR-RL = 0$, $(L-R)S = S(L-R) = 0$, the $SL^kR^lS$ relation and the $L_n$ and $R_n$ relations, all of our $I(\mg)$ linearly independent monomials can be written in the form $L^kS^mR^l$ where $k < n$, since otherwise we could use the $L_n$ relation to reduce it, $l < n$ for the same reason, and $m$ either $0$ or $1$ since the $SL^kR^lS$ relation reduces any pair of $S$ factors to a single $S$.
\begin{Lemma}
No monic polynomial in $L+R$ with coefficients in $I(\mg)$ and degree less than $n-1$ can vanish. 
\end{Lemma}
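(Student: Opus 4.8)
The plan is to realize $L$ and $R$ as honest polynomial-valued operators on $\mg$ and then specialize $F$ to a regular semisimple element, where the spectrum of $L+R$ can be read off directly. Writing $\langle u,w\rangle = tr(\pi(u)\pi(w))$ for the trace form (so $\langle x_\alpha,x_\beta\rangle = K_{\alpha\beta}$), the definition of $L$ gives $\langle L(u),w\rangle = tr(\pi(u)\pi(w)F)$ and that of $R$ gives $\langle R(u),w\rangle = tr(\pi(u)F\pi(w))$. Since $w \in sl(n)$ is traceless, cyclicity of the trace lets me drop the projection against $w$, so these pairings identify $L$ with the map $u \mapsto \mathrm{proj}_{sl(n)}(F\pi(u))$ (left multiplication by $F$ followed by the projector from $gl(n)$ to $sl(n)$) and $R$ with $u \mapsto \mathrm{proj}_{sl(n)}(\pi(u)F)$ (right multiplication by $F$, then projection). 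This is the same manipulation already used in Lemma 2 and in the verification $LR=RL$, so no new identity is required.

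Next I would evaluate the coordinate functions at a point of $\mg^*$ where $F$ specializes to a regular diagonal matrix $\mathrm{diag}(t_1,\dots,t_n)$ with $\dsum_i t_i = 0$ and the $t_i$ generic. On a root vector $e_{ij}$ (the matrix unit, $i\neq j$) both $Fe_{ij}=t_i e_{ij}$ and $e_{ij}F=t_j e_{ij}$ are already traceless, so the projection is trivial and $(L+R)(e_{ij}) = (t_i+t_j)e_{ij}$. Thus at this point $L+R$ is diagonalizable and its spectrum contains the $\binom{n}{2}$ numbers $t_i+t_j$ with $i<j$, which are pairwise distinct for generic $t$. (The eigenvalues on the Cartan are never needed.)

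Finally, suppose for contradiction that $p(L+R)=0$ for some monic $p(X)=X^d+a_{d-1}X^{d-1}+\cdots+a_0$ with each $a_i \in I(\mg)$ and $d < n-1$. Evaluating at the regular point sends each invariant $a_i$ to a scalar $\bar a_i$ and $p(L+R)$ to the operator $\bar p(\overline{L+R})$, where $\bar p(X)=X^d+\bar a_{d-1}X^{d-1}+\cdots+\bar a_0$ is still monic of degree $d$ and hence a nonzero polynomial. Applying $\bar p(\overline{L+R})=0$ to each $e_{ij}$ forces $\bar p(t_i+t_j)=0$ for all $i\neq j$. But then $\bar p$ has at least $\binom{n}{2}\geq n-1 > d$ distinct roots while having degree $d$, so $\bar p\equiv 0$, contradicting monicity. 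Hence no such relation can vanish.

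The only genuine obstacle is the bookkeeping in the first step: one must confirm that evaluating the abstract family-algebra element $p(L+R)$ at a point really produces the composite scalar operator $\bar p(\overline{L+R})$, and that the coefficient invariants specialize to honest constants independent of the eigenvector. Once that is pinned down, everything reduces to the observation that a regular $F$ supplies $\binom{n}{2}$ distinct eigenvalues $t_i+t_j$ on the root spaces, which is already more than $n-2$, so a nonzero polynomial of degree below $n-1$ cannot annihilate all of them.
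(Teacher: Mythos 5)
Your proof is correct, and it takes a genuinely different route from the paper's. The paper never specializes $F$: it lowers the free matrix index of each monomial $L^kR^{m-k}$ with the Killing form and symmetrizes over all tensor indices, producing $c_{m+2}$ plus products of lower-degree Casimir elements; since a monic degree-$m$ polynomial in $L+R$ has positive coefficients on every $L^kR^{m-k}$, its symmetrization has a nonvanishing $c_{m+2}$-coefficient, and vanishing is then excluded by the algebraic independence of $c_2,\ldots,c_n$ --- which is exactly where the hypothesis $m<n-1$ (so $m+2\leq n$) enters. You instead evaluate at a regular semisimple point. That step is sound and genuinely routine: evaluation at a point $\xi\in\mg^*$ is an algebra homomorphism from $End(\mg)\otimes\mC[\mg^*]$ to $End(\mg)$ which restricts to the family algebra, sends $I(\mg)$ to scalar matrices, and sends $F$ to $\pi(X_\xi)$ with $X_\xi$ the Killing-dual of $\xi$; your identifications $L(u)=\mathrm{proj}_{sl(n)}(F\pi(u))$ and $R(u)=\mathrm{proj}_{sl(n)}(\pi(u)F)$ are the same pairing manipulations already used in Lemma 2, and the eigenvectors $e_{ij}$ survive the projection because they are traceless, giving the $\binom{n}{2}$ generically distinct eigenvalues $t_i+t_j$. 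Comparing the two: your argument is more elementary (no symmetrization bookkeeping, no appeal to algebraic independence of the invariants) and proves strictly more --- the same root count shows that no monic polynomial in $L+R$ over $I(\mg)$ of degree less than $\binom{n}{2}$ can vanish, which incidentally does not contradict the degree-$(n-1)$ relation $\dsum_{l}L^{n-1-l}R^l=\cdots$, since that relation is not a polynomial in $L+R$ alone. What the paper's method buys is coherence with the rest of the text (everything stays inside the trace-tensor calculus used for all the other proofs) and a structural explanation of the threshold $n-1$: it is precisely the degree at which $c_{m+2}$ first fails to be an independent Casimir, mirroring the fact that the $L^kR^l$ relation, which arises from the reducibility of $c_{n+1}$, appears in degree $n-1$.
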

\begin{proof}
If we consider $L^kR^{m-k}$, lower the raised coordinate using the Killing form, and then symmetrize over all of the indices, coordinate or otherwise, we end up with a polynomial in Casimir elements of degree $m+2$ including a term of $c_{m+2}$ and all other terms products of Casimir elements of lower degree.\\
Now suppose that we have a monic polynomial in $L+R$ with coefficients in $I(\mg)$ and degree $m$ less than $n-1$. Then the leading terms, i.e. the terms involving no nontrivial Casimir elements, has positive coefficients for all terms of the form $L^kR^{m-k}$ and thus the symmetrization of this polynomial then yields a polynomial in Casimir elements with nonvanishing $c_{m+2}$ coefficient.\\
Since $m < n-1$, we have that $m+2 < n+1$ and hence $c_{m+2}$ is algebraically independent of the Casimir elements of lower degree; hence the symmetrization cannot vanish, and hence the polynomial in $(L+R)$ cannot vanish.
\end{proof}
Consider now the $L^kR^l$ relation. The leading term is 
$$\dsum_k L^kR^{n-1-k}$$
and hence multiplying this leading term by $(L+R)^m$ yields a polynomial in $L$ and $R$ that only has positive cofficients. In particular, the term $L^{n-1}R^m$ has positive coefficient in this polynomial. For $m \leq n-1$, neither $L^{n-1}$ nor $R^m$ can be reduced by the $L_n$ or $R_n$ relations.\\ 
Hence we get that for $m < n-1$, $(L+R)^m$ times the $L^kR^l$ relation yields a relation in each degree greater than $n-2$ that cannot be deduced from the other relations. Since no polynomial in $L+R$ vanishes for degree less than $n-1$, we get that the relations of the form $(L+R)^m$ times the $L^kR^l$ relation are themselves linearly independent from one another over $I(\mg)$. We also get a relation in degree $2n-2$ by squaring the $L^kR^l$ relation, and this one is also linearly independent from the other relations since the $L^kR^l$ relation is itself a polynomial in $L$ and $R$ that is linearly independent from all polynomials in $L+R$, just by comparing leading coefficients.\\
Now we show that there cannot be any other relations that are not in the ideal generated by the ones listed. We do so by counting the number of $I(\mg)$-linearly independent monomials.\\
Note that since $LS = RS$, 
$$L^{n-1}S = \frac{1}{n}\dsum_k L^kR^{n-1-k}S$$
Hence, using the $L^kR^l$ relation, we can reduce $L^{n-1}S$ to terms involving nontrivial Casimir elements. Hence we get that $L^{n-1}SR^l$ is not linearly independent over $I(\mg)$ from terms of lower degree. Similarly, $L^kSR^{n-1}$ cannot be linearly independent.\\
Thus we get that our linearly independent monomials are $L^kSR^l$ for $0\leq k,l\leq n-2$ and $L^kR^l$ for $0 \leq k, l\leq n-1$, minus one in each degree between $n-1$ and $2n-2$ since $(L+R)^m$ times the $L^kR^l$ relation gives $L^{n-1}R^m$ in terms of other monomials.\\ 
This yields a total of $2n^2-3n+1$ terms not known to be linearly dependent. If there are more relations, then there will be fewer linearly independent terms.\\
In [6], we get that the dimension over $I(\mg)$ of the family algebra $V_\lambda$ is given by
$$\dsum_{\mu} m_\lambda(\mu)^2$$
where $m_\lambda(\mu)$ is the multiplicity of the weight $\mu$ in $V_\lambda$. Since $I(\mg)$ is an integral domain and the family algebra is free over it, there is a set of linearly independent objects of size equal to the dimension.\\
For the adjoint representation, the weights with non-zero multipllicity are the roots, each with multiplicity $1$, and $0$, with multiplicity equal to the rank of the algebra. This gives us $n(n-1)+(n-1)^2 = 2n^2-3n+1$. Hence, since the relations given above limit us to $2n^2-3n+1$ linearly independent elements at most, and any further relations would reduce that number, there cannot be any more relations.
\end{proof}
Example: for $n = 4$, we get that the relations give us the following $21$ linearly independent pieces:
$$1, L, R, L^2, LR, R^2, S, L^3, L^2R, LR^2, LS, SR$$ 
$$L^3R, LR^3, L^2S, LSR, SR^2, L^3R^2, L^2SR, LSR^2, L^2SR^2$$
as predicted by the dimension formula $2\cdot4^2-3\cdot 4+1 = 21$
\section*{5: Natural Generators}
Instead of using $L$ and $R$ as defined above, we can instead look at $M = \frac{1}{2}(L-R)$ and $N = \frac{1}{2}(L+R)$. These elements are in many senses more natural, as $M$ is proportional to $x_\alpha\otimes x^\beta \otimes K^{\alpha\gamma}f_{\gamma\beta}^\delta x_\delta$, with $f_{\gamma\beta}^\delta$ being the structure constant, and $N$ is proportional to $x_\alpha\otimes x^\beta \otimes K_{\beta\gamma}\ptl^\alpha\ptl^\gamma c_3$. In particular, $M$ and $N$ are harmonic.\\
In this basis, we get the following relations:
$$MN = NM, \text{ } MS = SM = 0$$
$$SN^kS = -n\dsum_{\lambda\vdash k+2} \dprod_i^{|\lambda|} \left(-\frac{c_{\lambda_i}}{n}\right)S$$
$$\dsum_{k=0}^{\left\lfloor\frac{n-1}{2}\right\rfloor}\binom{n}{2k+1}N^{n-1-2k}M^{2k}=\dsum_{j=1}^{n-2}d_{n-j}\dsum_{k=0}^{\left\lfloor\frac{j-1}{2}\right\rfloor}\binom{j}{2k+1}N^{j-1-2k}M^{2k}$$
Similarly to $L_k$ and $R_k$, we define 
$$N_k = \dsum_{j=0}^{\left\lfloor\frac{k}{2}\right\rfloor}\binom{k}{2j}N^{k-2j}M^{2j}+\frac{1}{n}\left(\dsum_{\lambda\vdash k,|\lambda|>1}\left(\dprod_{i>2}^{|\lambda|}\frac{c_{\lambda_i}}{n}\right)N^{\lambda_1-1}SN^{\lambda_2-1}\right)$$
and get
$$N_n = \dsum_{k=0}^{n-2}d_{n-k}N_k$$
As noted earlier, the difference of the $L_n$ and $R_n$ relation is $2M$ times the $L^kR^l$ relation, so here only the sum of the $L_n$ and $R_n$ relations is given.\\
While this set of generators is in a sense more natural than the one given above, with two of the generators being harmonic, it is also much more unwieldy algebraically as $N$ and $M$ do not have the symmetry that $L$ and $R$ possessed. A set of generators with all three generators being harmonic is even more complicated algebraically.\\
${ }$\\
Example: for $n = 4$, the $SN^kS$ relations look exactly like the $SL^kS$ relations, since $N = \frac{1}{2}(L+R)$ and $S$ doesn't distinguish between $L$ and $R$.
$$4N^3+4NM^2 = \frac{c_2}{2}N+\frac{c_3}{3}$$
$$N_1 = N, \text{ } N_2 = N^2+M^2 +\frac{1}{4}S, \text{ }  N_3 = N^3+3NM^2+\frac{1}{4}(NS+SN)$$
$$N_4 = N^4+6N^2M^2+M^4+\frac{1}{4}(N^2S+NSN+SN^2)+\frac{1}{16}c_2S$$
$$N^4+6N^2M^2+M^4+\frac{1}{4}(N^2S+NSN+SN^2)+\frac{1}{16}c_2S = \frac{c_2}{2}(N^2+M^2 +\frac{1}{4}S)+\frac{c_3}{3}N+\frac{c_4}{4}-\frac{c_2^2}{8}$$
The linearly independent pieces now look like
$$1, M, N, M^2, MN, N^2, S, M^3, M^2N, MN^2, NS, SN$$ 
$$M^4, M^3N, N^2S, NSN, SN^2, M^5, N^2SN, NSN^2, N^2SN^2$$
\section*{6: Generalized Exponents}
For each subrepresentation of $\mg\otimes \mg^*$ with highest weight $\lambda$, there is a projection operator $$P_\lambda (x_\alpha\otimes x^\beta) = Pr(\lambda)_{\alpha\delta}^{\beta\gamma}x_\gamma\otimes x^\delta \in V_\lambda \subset \mg\otimes \mg^*$$ 
calculated in [1]. The generalized exponents of $\lambda$ are then the degrees of the nonvanishing elements of the form 
$$P_\lambda(x_\alpha\otimes x^\beta)P_\beta^\alpha$$ 
that are linearly independent over $I(\mg)$.\\
Using the Killing form, we identify $\mg^*$ with $\mg$ and consider $\mg\otimes \mg$. As a $\mg$-module this decomposes into $\wedge^2\mg$ and $S^2\mg$, the alternating and symmetric tensor square respectively, which then further decompose into irreducible representations.\\
For $n = 2$, $\wedge^2\mg$ is isomorphic to $\mg$ itself, and hence is the adjoint representation, with generalized exponent $1$. $S^2\mg$ decomposes into a trivial representation and a $5$-dimensional representation, with generalized exponents $0$ and $2$ respectively.\\
For $n = 3$, $\wedge^2\mg$ decomposes into a copy $\mg$, with generalized exponents $1$ and $2$, and two dual $10$-dimensional representations with weights $3\omega_1$ and $3\omega_2$ respectively and each with generalized exponent $3$. $S^2\mg$ decomposes into the trivial representation with generalized exponent $0$, another copy of $\mg$, again with generalized exponents $1$ and $2$, and a $27$-dimensional representation with generalized exponents $2, 3$ and $4$. See [9] for details. Note that Rozhkovskaya uses a different basis, generated by harmonic elements. Her $M_1$ is proportional to $M$, her $N_1$ is proportional to $N$, and her $N_2$ is proportional to $3N^2+3M^2+S+c_2$.\\
For $n \geq 4$, the decomposition of $\mg\otimes\mg$ is fixed. $\wedge^2\mg$ decomposes into a copy of $\mg$ and two dual representations with highest weights $2\omega_1+\omega_{n-2}$ and $\omega_2+2\omega_{n-1}$ respectively, while $S^2\mg$ decomposes into the trivial representation, another copy of $\mg$, and two representations with highest weights $\omega_2+\omega_{n-2}$ and $2\omega_1+2\omega_{n-1}$ respectively. In an orthonormal basis for $\mg$, the corresponding elements of the family algebra are actually symmetric or antisymmetric as matrices.\\
The $L^kR^l$ reduction relation gives us a relation $\sim$ on elements in the $\omega_2+\omega_{n-2}$ representation; applying the differential operator $D = (\ptl^\alpha c_3)K_{\alpha\beta}\ptl^\beta$ gives a relation equivalent to the multiples of the $L^kR^l$ relation times $L+R$, modulo the $L_n$ and $R_n$ relations. Since $D$ transforms as the trivial representation, $D$ applied to both sides of $\sim$ again gives a relation between elements of the $\omega_2+\omega_{n-2}$ representation; hence we get that the generalized exponents of $\omega_2+\omega_{n-2}$ plus a copy of $\{n-1,\ldots,2n-2\}$ gives the generalized exponents of $2\omega_1+2\omega_{n-1}$.\\ 
Along with the fact that $\omega_1+\omega_{n-1}$ has generalized exponents $1,\ldots,n-1$ gives us enough information to get the full set of generalized exponents for the representations in question, given in table 1. Note that the two copies of $\omega_1+\omega_{n-1}$ each give an independent set of harmonic basis elements, one symmetric, one antisymmetric. Written as $q$-multiplicities, we get that the generalized exponents of representations of $sl(n)$ are equivalent to the Kostka polynomials, which are computable from Young Tableaux [2]. Hence we can easily check the results given.
\begin{table*}[h]
\caption{Generalized Exponents in $C_{\omega_1+\omega_{n-1}}(sl(n))$}
	\centering
		\begin{tabular}{|c|c|}
		\hline
		Weight & q-multiplicity\\ \hline\hline
		$0$& $1$\\ \hline
$\omega_1+\omega_{n-1}$& $q[n-1]_q$\\ \hline
$2\omega_1+\omega_{n-2}$& $q^3\binom{n-1}{2}_q$\\ \hline
$\omega_2+2\omega_{n-1}$& $q^3\binom{n-1}{2}_q$\\ \hline
$\omega_2+\omega_{n-2}$& $q^2\binom{n-1}{2}_q-q^{n-1}$\\ \hline
$2\omega_1+2\omega_{n-1}$& $q^2\binom{n}{2}_q$\\ \hline
		\end{tabular}
\end{table*}
${ }$\\
Example: for $n = 4$, we have the following generalized exponents:
\begin{table*}[h]
	\centering
		\begin{tabular}{|c|c|}
		\hline
		Weight & q-multiplicity\\ \hline\hline
		$0$& $1$\\ \hline
$\omega_1+\omega_3$& $q+q^2+q^3$\\ \hline
$2\omega_1+\omega_2$& $q^3+q^4+q^5$\\ \hline
$\omega_2+2\omega_3$& $q^3+q^4+q^5$\\ \hline
$\omega_2+\omega_2 = 2\omega_2$& $q^2+q^4$\\ \hline
$2\omega_1+2\omega_3$& $q^2+q^3+2q^4+q^5+q^6$\\ \hline
		\end{tabular}
\end{table*}
\end{document}